\documentclass{article}

\usepackage{amsfonts,amsthm}
\usepackage{amssymb}
\usepackage{amsmath}

\usepackage{mathabx}

\usepackage{sseq}
\usepackage{rotating}
\usepackage{bbm}
\usepackage[all,cmtip]{xy}
\usepackage{amscd}
\numberwithin{equation}{subsection}
\usepackage{caption}
\usepackage{subcaption}

\usepackage{tikz}
\usetikzlibrary{arrows}
\usepackage{enumitem}

\usepackage{mathrsfs}
\usepackage{color}
\usepackage{xcolor}

\usepackage[latin1]{inputenc}
\usepackage{graphicx}
\usepackage{dsfont}
\usepackage{hyperref}

\newtheorem{theorem}{Theorem}[section]

\newtheorem{lemma}[theorem]{Lemma}
\newtheorem{proposition}[theorem]{Proposition}
\newtheorem{corollary}[theorem]{Corollary}

\theoremstyle{remark}
\newtheorem{remark}[theorem]{Remark}

\theoremstyle{definition}

\newtheorem{example}[theorem]{Example}

\theoremstyle{theorem}

\newtheorem{PropositionA}{Proposition}
\newtheorem*{maintheorem}{Main Theorem}








\newcommand{\ow}{\omega}
\newcommand{\p}{\partial}

\newcommand{\C}{{\mathbb{C}}}

\newcommand{\R}{{\mathbb{R}}}

\newcommand{\Z}{{\mathbb{Z}}}

\renewcommand{\epsilon}{\varepsilon}
\renewcommand{\theta}{\vartheta}


\DeclareMathOperator{\FS}{FS}

\DeclareMathOperator{\Cl}{Clif}

\DeclareMathOperator{\id}{id}

\DeclareMathOperator{\Fix}{Fix}

\usepackage{graphicx}
\usepackage{authblk}


\begin{document}
\title{Uniqueness of real Lagrangians up to cobordism}

\makeatletter
\newcommand{\subjclass}[2][2010]{%
  \let\@oldtitle\@title%
  \gdef\@title{\@oldtitle\footnotetext{#1 \emph{Mathematics Subject Classification.} #2.}}%
}
\newcommand{\keywords}[1]{%
  \let\@@oldtitle\@title%
  \gdef\@title{\@@oldtitle\footnotetext{\emph{Key words and phrases.} #1.}}%
}
\makeatother

\author
{
Joontae Kim
}
\subjclass{53D12, 57N70, 55M35}
\keywords{real Lagrangian submanifold, antisymplectic involution, cobordism, Smith theory}
\date{}

\setcounter{tocdepth}{2}
\numberwithin{equation}{section}
\maketitle

\begin{abstract}
We prove that a real Lagrangian submanifold in a closed symplectic manifold is unique up to cobordism. We then discuss the classification of real Lagrangians in $\C P^2$ and $S^2\times S^2$. In particular, we show that a real Lagrangian in $\C P^2$ is unique up to Hamiltonian isotopy and that a real Lagrangian in $S^2\times S^2$ is either Hamiltonian isotopic to the antidiagonal sphere or Lagrangian isotopic to the Clifford torus.
\end{abstract}


\section{Introduction}\label{sec: intro}

A Lagrangian submanifold $L$ in a symplectic manifold $(M,\ow)$ is called \emph{real} if there exists an \emph{antisymplectic involution} $R\colon M\to M$, i.e., $R^2=\id_M$ and $R^*\ow=-\ow$, such that
$$
L=\Fix(R)
$$
where $\Fix(R)=\{x\in M\mid R(x)=x\}$ is the fixed point set. The fixed point set of an antisymplectic involution is necessarily a Lagrangian submanifold of $M$ if it is \emph{nonempty}, see Section \ref{sec: smith}. If $M$ is boundaryless, then so is $\Fix(R)$.

Historically, antisymplectic involutions and real Lagrangians have appeared in different aspects of symplectic topology. The \emph{Birkhoff} (antisymplectic) involution on $T^*\R^2$,
$$
R_\text{Birk}(q_1,q_2,p_1,p_2)=(q_1,-q_2,-p_1,p_2)
$$
played an intriguing role in the study of the restricted three body problem as a discrete symmetry to describe \emph{symmetric periodic orbits}, see \cite{Birk}.
The \emph{Arnold--Givental conjecture} \cite[Theorem C]{Givental} says that a real Lagrangian submanifold $L$ in a closed symplectic manifold $(M,\ow)$ manifests a rigidity phenomenon, namely that it satisfies an Arnold-type estimate: if $L\  \pitchfork\ \phi_H(L)$ for some Hamiltonian diffeomorphism $\phi_H$, then
$$
\#\big(L\cap \phi_H(L) \big)\geqslant \sum_{j=0}^{\dim L} \dim H_j(L;\Z_2).
$$
For partially proved cases, we refer to \cite{Urs,Oh3} and the literatures therein.

In this paper, we explore the topology of real Lagrangians towards their uniqueness and classification. In contrast to monotone Lagrangian submanifolds, for real Lagrangian submanifolds no classification result seems to be known. The main result of this paper is the following uniqueness statement about real Lagrangians. 
\begin{maintheorem}\label{thm: theoremA}
Any two real Lagrangians in a closed symplectic manifold are smoothly cobordant.
\end{maintheorem}
This will be an immediate consequence of Theorem \ref{thm: realimplieszero}, which describes an identity for cobordism classes. To prove the main theorem, we employ the cobordism theory developed by Conner--Floyd \cite{CF}. 

We cannot in general expect uniqueness of real Lagrangians up to \emph{smooth isotopy} or even up to \emph{diffeomorphism} since there are two real Lagrangians in $S^2\times S^2$ which are not diffeomorphic. Indeed, these real Lagrangians are smoothly cobordant, but \emph{not Lagrangian cobordant}, see Section \ref{sec: S2xS2}. This shows that without further assumptions, our uniqueness result for real Lagrangians up to smooth cobordism is optimal. Notice that we do not claim the existence of real Lagrangians. As pointed out by Grigory Mikhalkin, a symplectic $K3$-surface with a generic (with respect to the $\Z$-lattice) cohomology class of the symplectic form does not admit an antisymplectic involution at all, and hence in particular has no real Lagrangian submanifold.\\

The classification of real Lagrangians in $\C P^2$ and in the monotone $S^2\times S^2$ follows quite easily from known results of the classification on monotone Lagrangian submanifolds, which are based on $J$-holomorphic curve theory. Complex projective space $\C P^2$ endowed with the Fubini--Study form $\omega_{\FS}$ contains
the canonical real Lagrangian $\R P^2$, and as we show in Section 4.1 this is the only one:
\begin{PropositionA}\label{prop: A}
Any real Lagrangian in $\C P^2$ is Hamiltonian isotopic to $\R P^2$.
\end{PropositionA}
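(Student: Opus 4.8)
The plan is to determine the diffeomorphism type of a real Lagrangian $L=\Fix(R)\subset(\C P^2,\omega_{\FS})$ by Smith theory, to observe that $L$ is automatically monotone, and then to feed $L$ into the $J$-holomorphic classification of monotone Lagrangians in $\C P^2$.

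First I would apply the Smith--Thom inequality recorded in Section \ref{sec: smith}, which bounds the total $\Z_2$-Betti number of the fixed locus of an involution by that of the ambient space:
$$\sum_{j} \dim H_j(L;\Z_2)\ \leqslant\ \sum_{j}\dim H_j(\C P^2;\Z_2)=3.$$
Since $L$ is a nonempty closed surface, I would then compare against the list of closed surfaces indexed by their total $\Z_2$-Betti number: an orientable surface of genus $g$ gives $2+2g$, and a non-orientable surface with $k$ cross-caps gives $2+k$. The bound leaves only $S^2$ (total $2$) and $\R P^2$ (total $3$); the torus and the Klein bottle, each of total $4$, are excluded immediately. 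To discard $S^2$ I would invoke the absence of Lagrangian spheres in $\C P^2$: a closed orientable Lagrangian surface $S$ has Weinstein neighbourhood $T^*S$, whence $[S]\cdot[S]=-\chi(S)$, equal to $-2$ for a sphere; but the intersection form on $H_2(\C P^2;\Z)\cong\Z$ is $n\mapsto n^2\geqslant 0$, a contradiction. Therefore $L$ is diffeomorphic to $\R P^2$.

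Next I would check that $L$ is monotone, in order to apply the monotone classification. As $(\C P^2,\omega_{\FS})$ is monotone and $R$ is antisymplectic, the doubling construction assigns to a disk $u\colon(D^2,\partial D^2)\to(\C P^2,L)$ the sphere obtained by gluing $u$ to $R\circ u$ along $\partial D^2\subset L$; using $R^*\omega_{\FS}=-\omega_{\FS}$ one finds that this sphere has symplectic area $2\int_u\omega_{\FS}$ and first Chern number equal to the Maslov index $\mu(u)$. The monotonicity relation $\omega_{\FS}=\kappa\,c_1$ on $\pi_2(\C P^2)$ then yields $\int_u\omega_{\FS}=\tfrac{\kappa}{2}\,\mu(u)$ for every disk $u$, which is precisely monotonicity of $L$ with positive constant $\kappa/2$.

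Finally, with $L$ a monotone Lagrangian $\R P^2$ in $\C P^2$, I would quote the $J$-holomorphic classification of monotone Lagrangian submanifolds of $\C P^2$, which identifies such an $\R P^2$ up to Hamiltonian isotopy with the standard $\R P^2$ arising as the fixed locus of complex conjugation, completing the proof. I expect the last step to be the main obstacle: the topological identification $L\cong\R P^2$ is elementary once the Smith inequality is available, but upgrading it to a Hamiltonian-isotopy statement rests entirely on the hard pseudoholomorphic-curve input, and one must be careful to cite a version of the classification that covers the non-orientable Lagrangian $\R P^2$ and not merely monotone Lagrangian tori.
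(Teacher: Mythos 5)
Your proposal is correct and follows essentially the same route as the paper: the Smith inequality pins the diffeomorphism type of $L=\Fix(R)$ down to $S^2$ or $\R P^2$, the sphere is excluded, and the Hamiltonian-isotopy statement is then outsourced to a hard pseudoholomorphic-curve theorem. The two places where you deviate are both sound but worth comparing. To exclude $S^2$, the paper stays inside Smith theory and uses the Euler characteristic relation (Theorem \ref{thm: eulobs}): $\chi(\C P^2)=3$ is odd while $\chi(S^2)=2$ is even. You instead use the Weinstein-neighborhood computation $[S]\cdot[S]=-\chi(S)=-2$ against the positive definite intersection form of $\C P^2$; this is equally valid, and the paper in fact records exactly this observation as a remark in Section \ref{sec: cp2}, so the only difference is that your exclusion uses symplectic input where the paper's is purely topological. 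Second, your monotonicity step (the doubling construction) is correct but superfluous: the theorem the paper cites, Li--Wu \cite[Theorem 6.9]{LiWu}, states that any two Lagrangian $\R P^2$'s in $\C P^2$ are Hamiltonian isotopic, with no monotonicity hypothesis; this also resolves the concern you raise at the end about locating a classification that covers the non-orientable Lagrangian $\R P^2$ rather than only tori. Once your final citation is pinned down to Li--Wu, your argument is complete, just slightly longer than the paper's.
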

Consider $S^2\times S^2$ equipped with the split symplectic form $\ow\oplus \ow$, where $\ow$ denotes a Euclidean area form on $S^2$. The \emph{antidiagonal sphere}
	$$\overline{\Delta}=\{(x,-x)\mid x\in S^2\}\subset S^2\times S^2$$
	where $x\mapsto -x$ denotes the antipodal map, is a real Lagrangian via the antisymplectic involution $(x,y)\mapsto (-y,-x)$. The \emph{Clifford torus}
	$$
	\mathbb{T}_{\Cl}=S^1\times S^1\subset S^2\times S^2
	$$ defined as the product of the equators in each $S^2$-factor is a real Lagrangian torus. The classification result is the following.
\begin{PropositionA}\label{prop: B}
Let $L$ be a real Lagrangian in $S^2\times S^2$. Then $L$ is either Hamiltonian isotopic to the antidiagonal sphere $\overline{\Delta}$, or Lagrangian isotopic to the Clifford torus $\mathbb{T}_{\Cl}$.
\end{PropositionA}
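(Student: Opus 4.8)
The plan is to classify real Lagrangians in the monotone $S^2\times S^2$ by leveraging the known classification of monotone Lagrangian submanifolds, which is available precisely in this setting thanks to $J$-holomorphic curve techniques. First I would establish that any real Lagrangian $L=\Fix(R)$ is automatically monotone: the antisymplectic involution $R$ reverses the symplectic area while preserving the Maslov index up to sign, so the symplectic area and Maslov index of any disk bounded by $L$ are forced into a fixed proportionality, yielding monotonicity (with the monotonicity constant inherited from the ambient monotone manifold). Since $\Fix(R)$ is a closed Lagrangian surface in $S^2\times S^2$, it is either a sphere or a torus by the classification of closed surfaces together with the orientability constraint; the Euler characteristic and orientability are constrained because $L$ is a Lagrangian whose normal bundle is identified with its cotangent bundle.

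Next I would invoke the classification of monotone Lagrangian surfaces in $(S^2\times S^2,\ow\oplus\ow)$. For the sphere case, the relevant result states that a monotone Lagrangian sphere in $S^2\times S^2$ is Hamiltonian isotopic to the antidiagonal $\overline{\Delta}$; this is essentially due to the combination of Hind's and related results establishing uniqueness of Lagrangian spheres up to Hamiltonian isotopy in this manifold. For the torus case, the corresponding classification (following work on monotone Lagrangian tori in $S^2\times S^2$, e.g.\ Dimitroglou Rizell--Goodman--Ivrii and the Clifford/Chekanov dichotomy) asserts that a monotone Lagrangian torus is Lagrangian isotopic to either the Clifford torus $\mathbb{T}_{\Cl}$ or the Chekanov torus. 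Thus after these two inputs, the only remaining task is to rule out the Chekanov torus as a real Lagrangian, so that every real Lagrangian torus is Lagrangian isotopic to $\mathbb{T}_{\Cl}$.

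The main obstacle I expect is precisely this last step: showing that the Chekanov torus is \emph{not} realizable as $\Fix(R)$ for any antisymplectic involution $R$. I would attack this using a displacement-energy or symplectic-invariant obstruction that distinguishes the two monotone tori and interacts with the reality condition. A natural approach is via Smith theory (as the paper's keywords suggest): a real Lagrangian must satisfy the $\Z_2$-homological constraints coming from the involution $R$ on $M$, and one can try to show the Chekanov torus fails the requisite Smith inequality or the induced sign/orientation compatibility. Alternatively, one can use the fact that the displaceability and the structure of the associated superpotential differ for the two tori, and check which is compatible with being fixed by an antisymplectic involution. The delicate point is that being \emph{monotone Lagrangian isotopic} to a given torus does not by itself determine whether that isotopy class contains a real representative, so the argument must extract a genuinely $R$-invariant obstruction rather than merely a Lagrangian-isotopy invariant.

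Once the Chekanov torus is excluded, the proof concludes by combining the two cases: a real Lagrangian in $S^2\times S^2$ is a closed monotone Lagrangian surface, hence either a sphere Hamiltonian isotopic to $\overline{\Delta}$ or a torus Lagrangian isotopic to $\mathbb{T}_{\Cl}$, which is exactly the asserted dichotomy.
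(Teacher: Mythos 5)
Your proposal has two genuine gaps, one in each half of the dichotomy.

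First, the reduction to ``sphere or torus'' is not justified. You claim that orientability and the Euler characteristic of $L$ are constrained ``because $L$ is a Lagrangian whose normal bundle is identified with its cotangent bundle.'' This is false: being Lagrangian does \emph{not} force orientability, and in fact $S^2\times S^2$ does contain Lagrangian Klein bottles (Goodman's thesis, cited in the paper), so no argument that only uses the Lagrangian condition can exclude $K=\R P^2\#\R P^2$. You also ignore that a fixed point set can be disconnected, so $S^2\sqcup S^2$ must be ruled out as well. The paper's proof uses the \emph{reality} condition at exactly this point: the Smith inequality limits the candidates to $S^2$, $T^2$, $\R P^2$, $K$, $S^2\sqcup S^2$; the mod $2$ Euler characteristic relation kills $\R P^2$; a disjoint union of two Lagrangian spheres is impossible because, by the Weinstein neighborhood theorem, every Lagrangian sphere has self-intersection $-2$ and hence is homologous to $\overline{\Delta}$, contradicting disjointness; and the Klein bottle is excluded by Mikhalkin's lemma, which shows that a connected non-orientable fixed surface of a smooth involution would produce a mod $2$ homology class of odd self-intersection, contradicting the evenness of the intersection form of $S^2\times S^2$. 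None of these steps is replaced by anything in your sketch.

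Second, your ``main obstacle'' in the torus case -- ruling out the Chekanov torus as a real Lagrangian -- is a misreading of the Dimitroglou Rizell--Goodman--Ivrii theorem and is not needed for this statement. DGI prove that \emph{all} Lagrangian tori in $S^2\times S^2$ are Lagrangian isotopic to one another; the Clifford/Chekanov dichotomy lives at the level of \emph{Hamiltonian} isotopy, not Lagrangian isotopy. Since Proposition B only claims Lagrangian isotopy to $\mathbb{T}_{\Cl}$ in the torus case, the conclusion follows immediately once $L$ is known to be a torus, with no need to exclude anything. This matters doubly: the step you flag as the crux is vacuous for the statement at hand, and the methods you sketch for it (Smith inequality applied to the Chekanov torus, superpotential compatibility) are not actual arguments -- showing that the Chekanov torus is not real is a genuinely hard separate theorem and is not proved, or needed, in this paper. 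The sphere case via Hind's theorem is the one ingredient you have in common with the paper's proof, and that part is fine.
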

By a \emph{Lagrangian isotopy} we mean a smooth isotopy through Lagrangian submanifolds. If two Lagrangians in a symplectic manifold are Hamiltonian isotopic, then they are necessarily Lagrangian isotopic. In general, the converse is not true. There exists a monotone Lagrangian torus in $S^2\times S^2$ which is Lagrangian isotopic to $\mathbb{T}_{\Cl}$ while not Hamiltonian isotopic to $\mathbb{T}_{\Cl}$, see Remark~\ref{rem: s2s2}.

\subsubsection*{Organization of the paper}
The main results of the paper are stated in the introduction. In Section \ref{sec: sec2}, we give a quick review of the Thom cobordism ring and Smith theory, as the main tools. In Section \ref{sec: sec3proof}, we prove the main theorem and its related corollaries. Finally, in Section \ref{sec: classification} we discuss the classification of real Lagrangians in $\C P^2$ and $S^2\times S^2$.

\section{Topology of fixed point sets of involutions}\label{sec: sec2}
We recall some facts from algebraic topology that are relevant for the study of fixed point sets of involutions. Throughout this paper, all manifolds and maps are assumed to be smooth.
\subsection{The Thom cobordism ring}
We briefly explain the Thom cobordism ring and  its basic properties. We refer to \cite{CF,Wall} for more details.

 Two closed $n$-manifolds $X_1$ and $X_2$ are \emph{cobordant} if there is a manifold $Y$ (called a \emph{cobordism}) such that $\p Y= X_1\sqcup X_2$. We say that a manifold $X$ is \emph{null-cobordant} (or bounds a manifold)  if there is a manifold $Y$ such that $\p Y=X$. The cobordism relation is an equivalence relation on the class of closed manifolds of fixed dimension. The equivalence class of a closed manifold $X$ is denoted by $[X]$ and is called a 
 \emph{cobordism class} of $X$.
For $n\geqslant 0$ we abbreviate
$$
\mathfrak{N}_n=\Big\{[X]\mid \text{$X$ is a closed manifold of dimension $n$} \Big\}.
$$
An abelian group structure on $\mathfrak{N}_n$ is given by disjoint union
$$
[X_1]+[X_2]:=[X_1\sqcup X_2].
$$
Note that $X$ is null-cobordant if and only if $[X]=0$ in $\mathfrak{N}_n$.
We abbreviate by $\mathfrak{N}=\sum_{n\geqslant 0}\mathfrak{N}_n$ the direct sum of cobordism groups. Topological product defines a product structure on $\mathfrak{N}$,
$$
[X_1]\cdot [X_2]:=[X_1\times X_2].
$$
The ring $\mathfrak{N}$ is called the \emph{Thom cobordism ring}. Thom \cite{Thom} showed that $\mathfrak{N}$ is a polynomial algebra over $\Z_2$, with one generator $x_i$ in each dimension $i$ not of the form $2^j-1$ for $j\geqslant 1$, i.e.,
$$
\mathfrak{N}=\Z_2[x_2,x_4,x_5,x_6,x_8,\dots].
$$
Moreover, he showed that real projective spaces $\R P^{2n}$ represent generators $x_{2n}$ of even dimensions, and Dold \cite{Dold} explicitly constructed closed manifolds representing the  generators $x_{2n+1}$ of odd dimensions. The following is obvious from the structure of the Thom cobordism ring: if a closed manifold $X$ satisfies $[X]^2=0$, then $[X]=0$. This will be used in the proof of the main theorem.

\begin{remark}\
\begin{enumerate}[label=(\arabic*)]
	\item \label{rem: 1} By Wall \cite[Lemma 7]{Wall}, it is known that $[\C P^n]=[\R P^n]^2$.
	\item Two closed manifolds are cobordant if and only if they have the same Stiefel--Whitney numbers, see \cite{Wall}.
	\item \label{rem: 2} Since the Stiefel--Whitney numbers of $\R P^{2n+1}$ vanish, $[\R P^{2n+1}]=0$ for all $n\geqslant 0$.
	\item \ref{rem: 1} and \ref{rem: 2} yield $[\C P^{2n}]=[\R P^{2n}]^2=x_{2n}^2\ne 0$ while $[\C P^{2n+1}]=0$.
	\item Let $X_1$ and $X_2$ be closed $n$-manifolds. Then
$$
[X_1]+[X_2]=[X_1\# X _2]
$$
where $\#$ denotes the connected sum of manifolds. This follows from the fact that the connected sum can be seen as a 0-surgery on $X_1\sqcup X_2$.
\end{enumerate}
\end{remark}

\subsection{Smith theory}\label{sec: smith}
The following theorem gives a topological constraint for the fixed point set of an involution. See \cite[Theorem 4.3, Chapter III]{Bre} for a proof.
\begin{theorem}[Euler characteristic relation]\label{thm: eulobs} Let $I\colon X\to X$ be an involution on a manifold $X$. Then we have
	$$
	\chi(X)=\chi(\Fix(I))\mod 2
	$$
	where $\chi$ denotes the Euler characteristic.
\end{theorem}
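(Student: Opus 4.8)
The plan is to reduce the congruence to counting the simplices of a well-chosen equivariant triangulation. First I would equip the (compact) manifold $X$ with a smooth triangulation on which $I$ acts simplicially; the existence of such an invariant triangulation for a smooth involution is the standard input from equivariant triangulation theory. Passing to the barycentric subdivision then makes the action \emph{regular}: a simplex of the subdivision is a flag of faces $F_0\subset\cdots\subset F_k$ recorded by its barycenters, and since $I$ preserves dimension it can send such a flag to itself only by fixing each $F_i$, hence each barycenter, hence the whole simplex pointwise. Thus after one subdivision every simplex $\sigma$ is either fixed pointwise by $I$ or satisfies $I(\sigma)\neq\sigma$.

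With this model in place, I would split the simplices of $X$ into the fixed ones and the moved ones. The fixed simplices form a subcomplex (a face of a pointwise-fixed simplex is again pointwise fixed), and by regularity this subcomplex is precisely the locus on which $I$ acts trivially, so it triangulates $\Fix(I)$. The moved simplices, on the other hand, are permuted in free pairs $\{\sigma,I(\sigma)\}$. Hence if $c_k$ denotes the number of $k$-simplices of $X$ and $f_k$ the number of those fixed by $I$, the difference $c_k-f_k$ is even for every $k$.

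The conclusion is then pure bookkeeping. Computing the Euler characteristic as the alternating sum of simplex counts, I would write
$$
\chi(X)=\sum_k(-1)^k c_k=\sum_k(-1)^k f_k+\sum_k(-1)^k(c_k-f_k)=\chi(\Fix(I))+\sum_k(-1)^k(c_k-f_k),
$$
where the identification $\sum_k(-1)^k f_k=\chi(\Fix(I))$ uses that the fixed simplices triangulate $\Fix(I)$. Since each $c_k-f_k$ is even, the remaining sum is an even integer, and reducing modulo $2$ gives $\chi(X)\equiv\chi(\Fix(I))\pmod 2$.

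I expect the only genuine obstacle to be the geometric input rather than the combinatorics: one must know that a smooth involution admits an invariant triangulation and that the fixed point set appears as a subcomplex after subdivision. Granting this, as supplied by equivariant triangulation theory for compact transformation groups, the parity statement follows immediately, which is why the heavier Smith-sequence machinery in $\Z_2$-cohomology—though it proves the same result—can be bypassed here.
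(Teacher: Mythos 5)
Your proof is correct, but it takes a genuinely different route from the paper's. The paper does not argue this theorem itself: it cites Bredon \cite[Theorem 4.3, Chapter III]{Bre}, where the relation is extracted from Smith theory, i.e.\ from the Smith exact sequences in $\Z_2$-cohomology (which in fact give the sharper identity $\chi(X)+\chi(\Fix(I))=2\chi(X/I)$). You replace that cohomological machinery with an equivariant triangulation argument, and the combinatorial core is sound: after one barycentric subdivision a setwise-invariant simplex is pointwise fixed (your flag argument is the right one --- the barycenters in a flag have distinct dimensions, and an affine map fixing all vertices of a simplex fixes it pointwise), the pointwise-fixed simplices form a subcomplex whose underlying space is exactly $\Fix(I)$ (the carrier of any fixed point is setwise invariant, hence pointwise fixed), and the remaining simplices are freely paired by $I$, so all face counts agree mod $2$. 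The trade-offs are worth spelling out. Your argument is elementary and makes the parity completely transparent, but it needs $X$ compact (or at least a finite equivariant complex) for the counting to make sense --- harmless here, since the paper only applies the theorem to closed manifolds --- and it leans on the equivariant triangulation theorem for smooth actions (Illman), a genuinely nontrivial geometric input, so the proof is ``elementary'' only modulo that citation. Bredon's approach needs no smoothness or triangulability, works for much more general spaces and for $\Z_p$-actions (yielding congruences mod $p$), and comes from the same machinery that proves the Smith inequality (Theorem~\ref{thm: smithineq}), which the paper needs anyway and which your simplex-counting method cannot reach.
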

Working mod 2 in this theorem is necessary as the example $S^2$ with the equator given by the fixed point set of a reflection illustrates.
An immediate corollary is that every fixed point set of an involution on a manifold of odd Euler characteristic is nonempty. In particular, we obtain
\begin{corollary}\label{cor: oddeulerfixedpoint}
Every fixed point set of an antisymplectic involution on a symplectic manifold of odd Euler characteristic is nonempty and hence a real Lagrangian submanifold.
\end{corollary}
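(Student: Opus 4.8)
The plan is to read the statement off directly from the Euler characteristic relation (Theorem~\ref{thm: eulobs}), since an antisymplectic involution is in particular a smooth involution of the underlying manifold. First I would let $R\colon M\to M$ be the given antisymplectic involution on the symplectic manifold $(M,\ow)$ and note that $R$ qualifies as an involution in the sense of Theorem~\ref{thm: eulobs}, so that theorem applies verbatim to give the congruence $\chi(M)\equiv\chi(\Fix(R))\pmod 2$.

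Next I would argue by contradiction. Suppose $\Fix(R)=\emptyset$. With the standard convention $\chi(\emptyset)=0$, the congruence above forces $\chi(M)\equiv 0\pmod 2$, i.e.\ $\chi(M)$ is even, contradicting the hypothesis that $M$ has odd Euler characteristic. Hence $\Fix(R)$ must be nonempty.

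Finally, having established $\Fix(R)\neq\emptyset$, I would invoke the fact recalled at the beginning of this section (and in the introduction) that a \emph{nonempty} fixed point set of an antisymplectic involution is automatically a Lagrangian submanifold of $M$. Since $\Fix(R)$ is by construction the fixed point set of the antisymplectic involution $R$, it is by definition a real Lagrangian, which completes the argument.

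As for the main obstacle: honestly there is none of substance at this level, since the statement is an immediate corollary. All the genuine content is packaged into Theorem~\ref{thm: eulobs} (whose proof is cited to Bredon) together with the general claim that nonempty real fixed loci are Lagrangian. The only point that requires any care is the bookkeeping convention $\chi(\emptyset)=0$, which is precisely what makes the emptiness of $\Fix(R)$ detectable through a parity condition on $\chi(M)$.
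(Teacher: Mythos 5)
Your proposal is correct and matches the paper's argument exactly: the paper derives nonemptiness of the fixed point set as an immediate consequence of Theorem~\ref{thm: eulobs} (since $\chi(\emptyset)=0$ is even while $\chi(M)$ is odd), and then invokes the fact from Section~\ref{sec: smith} that a nonempty fixed point set of an antisymplectic involution is Lagrangian. There is nothing to add or correct.
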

One should be aware that this is a non-symplectic phenomenon. As we mentioned in the introduction, the fixed point set of an antisymplectic involution $R$ on a symplectic manifold $(M,\ow)$ is Lagrangian if it is nonempty. This follows from the fact that the linear antisymplectic involution $T_xR\colon T_xM \to T_xM$ at $x\in L$ gives rise to a Lagrangian splitting $T_x M = V_1 \oplus V_{-1}$, where $V_{\pm 1}$ denotes the eigenspace of $T_xR$ to the eigenvalue $\pm 1$, and that $T_x\Fix(R) = V_1$.

Here are examples of symplectic manifolds of even Euler characteristic which admit an antisymplectic involution with and without fixed points.
\begin{example}\label{ex: nonemptyCPodd}
The antisymplectic involution on $\C P^{2n+1}$ defined by
	$$
	[z_0:\dots:z_{2n+1}]\mapsto [-\bar{z}_1:\bar{z}_0:\dots:-\bar{z}_{2n+1}:\bar{z}_{2n}]
	$$
	has no fixed point. On a closed orientable surface $\Sigma_g$ of genus $g \geq 0$ one easily finds antisymplectic involutions without and with fixed points as follows. 
	
Let $\tau$ denotes the rotation by $\pi$ on $S^2$ and $\rho$ the reflection on $S^2$ as in Figure~\ref{fig: involutions}. The reflection $\rho$ is an antisymplectic involution whose fixed point set is the equator. Since $\tau$ and $\rho$ commute, the composition $\tau\circ \rho$ is an antisymplectic involution. Indeed, it is the  antipodal map on $S^2$ and hence has no fixed point. The same idea applies for any closed orientable surfaces $\Sigma_g$.
\end{example}
	\begin{figure*}[h]
\begin{center}
\begin{tikzpicture}[scale=0.65]
\draw (0,0) circle (1.5cm);
\draw [dashed] (1.5, 0) arc [x radius = 1.5cm, y radius = 0.7cm, start angle = 0, end angle = 180];
\draw (1.5, 0) arc [x radius = 1.5cm, y radius = 0.7cm, start angle = 0, end angle = -180];


\draw [dashed] (0,2.5)--(0,-2) ;
\draw [<-] (0.4, 2) arc [x radius = 0.4cm, y radius = 0.2cm, start angle = -20, end angle = -200];
\draw [<->] (-2,-0.4) -- (-2, 0.4);
\node at (0.6,2.2) {$\tau$};
\node at (-1.8,0.7) {$\rho$};

\begin{scope}[xshift=3cm]
\draw (8, 0.1) arc [x radius = 0.7cm, y radius = 0.3cm, start angle = 0, end angle = -180];
\draw (5.4, 0.1) arc [x radius = 0.7cm, y radius = 0.3cm,
  start angle = 0, end angle = -180];
\draw (7.9, -0.05) arc [x radius = 0.6cm, y radius = 0.3cm,
  start angle = 0, end angle = 180];
\draw (5.3, -0.05) arc [x radius = 0.6cm, y radius = 0.3cm,
  start angle = 0, end angle = 180];
\draw (6,0) ellipse (3cm and 1.5cm);
\draw [dashed] (6,2.5)--(6,-2) ;
\draw [<-] (6.4, 2) arc [x radius = 0.4cm, y radius = 0.2cm, start angle = -20, end angle = -200];
\draw [<->] (2.6,-0.4) -- (2.6, 0.4);
\node at (6.6,2.2) {$\tau$};
\node at (2.8,0.7) {$\rho$};
\end{scope}
\end{tikzpicture}		
\end{center}
\caption{Antisymplectic involutions on $S^2$ and $\Sigma_2$.}
\label{fig: involutions}
\end{figure*}
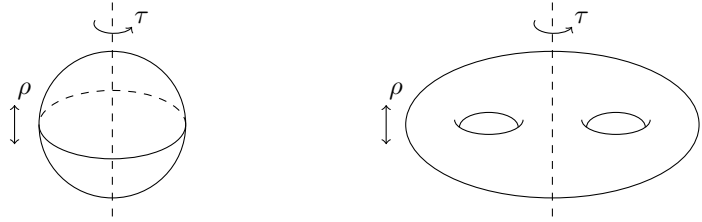
The next topological obstruction for being the fixed point set of an involution is the so-called \emph{Smith inequality}. It says that the topology of the fixed point set of an involution cannot be more complicated than the topology of the ambient manifold in terms of the sum of Betti numbers. See \cite[Theorem 4.1, Chapter~III]{Bre} for a proof.
\begin{theorem}[The Smith inequality]\label{thm: smithineq}
	Let $I\colon X\to X$ be an involution on a manifold $X$. Then we have
	$$
	\dim H_*(X;\Z_2)\geqslant \dim H_*(\Fix(I);\Z_2).
	$$
\end{theorem}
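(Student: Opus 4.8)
The plan is to reduce the inequality to a short piece of homological algebra over $\Z_2$ driven by the single operator $\sigma = 1 + I$ acting on the $\Z_2$-chains of $X$, exploiting that $\sigma^2 = 0$ in characteristic $2$. Assume $X$ has finite-dimensional total $\Z_2$-homology, as otherwise there is nothing to prove. First I would replace $X$ by a combinatorial model adapted to the action: by equivariant triangulation, and passing to a barycentric subdivision so that no simplex is inverted by $I$, I may assume $X$ carries a $G$-CW structure for $G = \Z_2 = \langle I\rangle$ in which $F := \Fix(I)$ is a subcomplex, every simplex of $F$ is fixed pointwise, and every simplex off $F$ is carried by $I$ to a distinct simplex. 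Writing $C = C_*(X;\Z_2)$ for the cellular chain complex, the involution makes $C$ a complex of $\Z_2[G]$-modules, and I set $\sigma = 1 + I\colon C \to C$. Since $I$ commutes with the boundary and $I^2 = \id$, the map $\sigma$ is a chain map with $\sigma^2 = (1+I)^2 = 1 + 2I + I^2 = 0$ over $\Z_2$.

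The combinatorial model makes the relevant subcomplexes transparent. A chain lies in $\ker\sigma$ iff it is $I$-invariant, so $\ker\sigma = C_*(F;\Z_2) \oplus \sigma C$, where $\sigma C$ is spanned by the invariant chains $c + Ic$ attached to the free simplex pairs; simultaneously $\im\sigma = \sigma C$, since $\sigma$ annihilates the fixed simplices. Because $\im\sigma \subseteq \ker\sigma$ splits off the fixed part, the short exact sequence of complexes
\[
0 \longrightarrow \sigma C \longrightarrow \ker\sigma \longrightarrow C_*(F;\Z_2) \longrightarrow 0
\]
is split, giving $\dim H_*(\ker\sigma) = \dim H_*(\sigma C) + \dim H_*(F;\Z_2)$. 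On the other hand, the tautological sequence
\[
0 \longrightarrow \ker\sigma \longrightarrow C \xrightarrow{\ \sigma\ } \sigma C \longrightarrow 0
\]
yields, via the induced long exact sequence in homology, the bound $\dim H_*(\ker\sigma) \le \dim H_*(C) + \dim H_*(\sigma C)$: exactness squeezes each $H_n(\ker\sigma)$ between the image of a connecting homomorphism out of $H_{n+1}(\sigma C)$ and a subspace of $H_n(C)$. Substituting the split identity and cancelling the common term $\dim H_*(\sigma C)$ leaves exactly
\[
\dim H_*(F;\Z_2) \le \dim H_*(X;\Z_2),
\]
which is the Smith inequality after the harmless identification of homology and cohomology dimensions over the field $\Z_2$.

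The one genuinely nontrivial ingredient is the first step: producing the equivariant combinatorial model in which $\ker\sigma$ and $\im\sigma$ decompose as above. For a smooth involution on a manifold this is guaranteed by equivariant triangulation, and it is precisely the geometric input that localizes the problem onto $F$; once it is in place the remainder is the two-line diagram chase above. I therefore expect this reduction — equivalently, the verification that $\ker\sigma/\im\sigma$ computes $H_*(F;\Z_2)$ — to be the main obstacle, while the algebra is formal. An alternative route avoiding triangulations would run the Borel construction $X\times_G EG \to BG$ and combine the localization theorem, which identifies the $\Z_2[u]$-module $H^*_G(X;\Z_2)$ with $H^*(F;\Z_2)\otimes\Z_2[u]$ after inverting $u$, with the Leray--Serre bound $\mathrm{rank}\,H^*_G(X;\Z_2) \le \dim H^*(X;\Z_2)$ coming from the collapse estimate on $E_2 = H^*(X)\otimes\Z_2[u]$; comparing the two localized ranks again gives $\dim H_*(F;\Z_2) \le \dim H_*(X;\Z_2)$.
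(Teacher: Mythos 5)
The paper does not prove this theorem at all --- it simply cites Bredon [Theorem 4.1, Chapter III] --- and your argument is precisely the classical Smith-sequence proof that underlies that citation: the operator $\sigma=1+I$ over $\Z_2$, the decomposition $\ker\sigma=C_*(\Fix(I);\Z_2)\oplus\sigma C$ in a regular equivariant triangulation, and the two exact sequences played against each other. Your proof is correct, with one small point to tighten: cancelling the common term $\dim H_*(\sigma C)$ requires knowing that this quantity is finite, and since the paper applies the inequality to non-compact manifolds (e.g.\ to rule out closed real Lagrangians in $\R^{2n}$), where the chain complex is infinite-dimensional, this finiteness does not follow from $\dim H_*(X;\Z_2)<\infty$ alone. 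It does follow from your own exact sequences run degree by degree: writing $a_n=\dim H_n(\sigma C)$, $b_n=\dim H_n(X;\Z_2)$, $f_n=\dim H_n(\Fix(I);\Z_2)$, exactness gives $a_n+f_n\leqslant b_n+a_{n+1}$, and since $a_n=0$ for $n>\dim X$, downward induction shows each $a_n$ is finite; summing these inequalities then yields the theorem without any illegitimate cancellation.
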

Using this inequality, we deduce that a submanifold of $\C P^n$ diffeomorphic to the $n$-torus cannot be the fixed point set of an involution, as $\dim H_*(\C P^n;\Z_2)=n+1<\dim H_*(T^n;\Z_2)=2^n$ for $n\geqslant 2$. In particular, the Clifford torus
$$
\mathbb{T}_{\Cl}^n=\big\{[z_0:\dots :z_n]\mid |z_0|=\cdots=|z_n| \big\}\subset \C P^n
$$
is not a real Lagrangian for $n\geqslant 2$. If $n=1$ the equator in $\C P^1=S^2$, which can be seen as the 1-dimensional Clifford torus $\mathbb{T}_{\Cl}^1$, is a real Lagrangian.

On the other hand, by Darboux's theorem, any symplectic manifold admits a Lagrangian torus. In contrast to this, the existence of a (closed) real Lagrangian in a symplectic manifold seems to be rare as this is a completely global problem. Indeed, there is no \emph{closed} real Lagrangian in $(\R^{2n},\ow_0=\sum_{j=1}^n dx_j\wedge dy_j)$ by the Smith inequality. 
\begin{remark}
Historically, Smith theory studied the topology of the fixed point sets of transformations of finite period on certain topological spaces. The central philosophy is that the fixed point set inherits the same cohomological characteristics of the ambient space. As a favorite result, if a manifold $X$ is a mod 2 homology sphere, then so is the fixed point set of an involution on $X$. This is a direct consequence of Theorems \ref{thm: eulobs} and \ref{thm: smithineq}.
\end{remark}

\section{Proof of the main theorem}\label{sec: sec3proof}

The main results of this paper are obtained from the following. 
\begin{theorem}\label{thm: realimplieszero}
If $L$ is a real Lagrangian in a closed symplectic manifold $(M,\ow)$, then $[M]=[L\times L]$.
\end{theorem}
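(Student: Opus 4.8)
The plan is to realize both $M$ and $L\times L$ as the underlying manifolds of involutions that share the same fixed-point data, and then to feed this coincidence into the Conner--Floyd bordism theory of involutions.

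First I would record the two geometric inputs. Since $L=\Fix(R)$ is Lagrangian, the splitting $T_xM=V_1\oplus V_{-1}$ at $x\in L$ (with $V_1=T_xL$ and $\ow$ identifying $V_{-1}\cong(T_xL)^*$), equivalently Weinstein's neighborhood theorem, shows that the normal bundle $\nu$ of $L$ in $M$ is isomorphic to $T^*L$, hence to $TL$ as a real vector bundle after choosing a metric. On the other side, the swap involution $\sigma(a,b)=(b,a)$ on $L\times L$ has fixed-point set the diagonal $\Delta\cong L$, and the $(-1)$-eigenbundle of $d\sigma$ along $\Delta$ is the anti-diagonal $\{(v,-v)\}$, which is again isomorphic to $TL$ via $(v,-v)\mapsto v$. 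Thus $(M,R)$ and $(L\times L,\sigma)$ are two involutions with the same fixed-point data, namely the pair $(L,TL)$, a single datum of codimension $n=\tfrac12\dim M$.

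Next comes the cobordism-theoretic heart. Consider the involution $R\sqcup\sigma$ on $X:=M\sqcup(L\times L)$. Its fixed-point set is $L\sqcup\Delta$ with normal bundle $TL\sqcup TL$, so its class in the unoriented bordism group of vector bundles $\mathfrak{N}_{n}(BO(n))$ is $(L,TL)+(L,TL)=2\,(L,TL)=0$, because these groups are $\Z_2$-vector spaces. I would then invoke the structural result of Conner--Floyd that the kernel of the fixed-point-data homomorphism on the bordism group of involutions is represented by free involutions; hence $(X,R\sqcup\sigma)$ is equivariantly cobordant to a free involution $(N,S)$. Finally, a free involution has null-cobordant total space: writing $N$ as the unit sphere bundle $S(\ell)$ of the real line bundle $\ell\to N/S$ classifying the double cover $N\to N/S$, one has $N=\p D(\ell)$, so $[N]=0$. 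Combining, $[M]+[L\times L]=[X]=[N]=0$, that is $[M]=[L\times L]$.

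I expect the main obstacle to be the precise invocation of Conner--Floyd in the middle step: formulating and using the statement that an involution whose fixed-point data bounds is equivariantly cobordant to a free one, equivalently exactness of $\mathfrak{N}_*(B\Z_2)\to\mathcal{I}_*\to\bigoplus_k\mathfrak{N}_{*-k}(BO(k))$ at the bordism group of involutions $\mathcal{I}_*$. If one prefers to avoid the abstract machinery, the same conclusion can be reached by hand: by the equivariant tubular neighborhood theorem both $M$ and $L\times L$ contain an equivariantly identified copy of the disk bundle $D(TL)$ with its fiberwise $(-1)$ involution, with free complements $M_0$ and $(L\times L)_0$ agreeing on the common boundary $S(TL)$; gluing the complements produces a closed manifold $N=M_0\cup_{S(TL)}(L\times L)_0$ carrying a free involution, and a cut-and-paste cobordism yields $[M]+[L\times L]=[N]=0$. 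The remaining points, namely the symplectic identification $\nu\cong TL$ and the elementary fact that free involutions bound, are routine.
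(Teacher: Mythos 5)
Your proposal is correct, and it reaches $[M]=[L\times L]$ by a genuinely different route from the paper's, although the geometric core is identical: both proofs rest on the observations that $N_ML\cong TL$ (since $L$ is Lagrangian) and that the diagonal $\Delta\subset L\times L$ is the fixed locus of the swap $\sigma$ with normal bundle again isomorphic to $TL$, i.e.\ that $(M,R)$ and $(L\times L,\sigma)$ have the same fixed-point data $(L,TL)$. The paper then finishes in two lines by quoting Conner--Floyd's Theorem (24.2), $[X]=[{\bf P}(N_XF\oplus\epsilon)]$, for each involution separately: both classes equal $[{\bf P}(TL\oplus\epsilon)]$. You instead pass to the disjoint-union involution on $M\sqcup(L\times L)$, note that its fixed-point data is twice the class of $(L,TL)$ and hence vanishes in the $\Z_2$-vector space $\mathfrak{N}_n(BO(n))$, and invoke the structural fact that an involution with bounding fixed data is equivariantly cobordant to a free one, together with $[N]=0$ for any free involution $(N,S)$ since $N=\partial D(\ell)$. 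These inputs are genuine theorems; your exactness claim follows, for instance, from Stong's injectivity of the fixed-point-data homomorphism, which is exactly the Proposition 11 of Stong that the paper cites in its remark on equivariant cobordism, and run this way your argument even yields the stronger equivariant identity $[M,R]=[L\times L,\sigma]$ that the paper only records as a remark and which its own proof does not give. Your ``by hand'' variant is essentially the standard proof of that exactness specialized to the situation at hand and is the most self-contained option, but its last step deserves care: general cut-and-paste does \emph{not} preserve unoriented cobordism classes (the cut-and-paste invariant is the Euler characteristic, not the Stiefel--Whitney numbers), so one must exhibit the specific exchange cobordism rather than appeal to cut-and-paste invariance. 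For example, attach $D(TL)$, $M_0$, $(L\times L)_0$, $D(TL)$ in this cyclic order to the four pieces $S(TL)\times e$, where $e$ runs over the edges of the square $[0,1]^2$; after smoothing corners one obtains a compact manifold whose boundary, read off at the four corners, is $M\sqcup N\sqcup(L\times L)\sqcup\bigl(D(TL)\cup_{S(TL)}D(TL)\bigr)$, and the last component bounds $D(TL)\times[0,1]$, giving $[M]+[L\times L]=[N]=0$. With that detail supplied both of your variants are complete; what the paper's route buys is brevity (one citation, no equivariant bordism groups), while yours buys self-containedness and the equivariant refinement.
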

The following theorem is a crucial ingredient of the proof. It intuitively says that the cobordism class of the ambient manifold is completely determined by a normal bundle of the fixed point set of an involution.
\begin{theorem}[Conner--Floyd]\label{thm: cruciallem}
	Let $I\colon X\to X$ be an involution on a closed manifold $X$ with nonempty fixed point set $F=\Fix(I)$. Then we have
	$$
	[X]=[{\bf P}(N_XF\oplus \epsilon)]
	$$
	where $N_XF$ is the normal bundle of $F$ in $X$, $\epsilon$ is the trivial real line bundle over $F$, and ${\bf P}(\cdot)$ denotes the real projectivization of a vector bundle.
\end{theorem}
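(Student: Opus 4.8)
The plan is to prove the identity by exhibiting an explicit cobordism between $X$ and $\mathbf P(N_XF\oplus\epsilon)$, rather than by comparing Stiefel--Whitney numbers. First I would enlarge the picture by one dimension: form the product $X\times[-1,1]$ and equip it with the involution $\bar I(x,t)=(I(x),-t)$. Its fixed point set is exactly $\Fix(\bar I)=F\times\{0\}$, since a point is fixed precisely when $I(x)=x$ and $-t=t$. The extra interval coordinate contributes one normal direction, on which $\bar I$ acts by $-1$, so the normal bundle of $F\times\{0\}$ in $X\times[-1,1]$ is $N_XF\oplus\epsilon$, and $\bar I$ acts on it fiberwise as the antipodal map $v\mapsto -v$.

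The point of moving to $X\times[-1,1]$ is that the outer boundary will become one copy of $X$ in the quotient. To make the quotient smooth, I would excise the fixed locus so that the involution becomes free. Choosing a $\bar I$-invariant Riemannian metric by averaging, and using the equivariant tubular neighborhood theorem, a closed neighborhood $U$ of $F\times\{0\}$ is equivariantly diffeomorphic to the disk bundle $D(N_XF\oplus\epsilon)$ carrying the fiberwise antipodal involution. On the complement $W=(X\times[-1,1])\setminus\mathrm{int}(U)$ the involution $\bar I$ acts freely, so $W/\bar I$ is a smooth compact manifold with boundary.

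It then remains to identify $\p(W/\bar I)=(\p W)/\bar I$. The boundary of $W$ splits into two pieces. The outer part $X\times\{-1,1\}$ is interchanged by $\bar I$ (since $\bar I(x,1)=(I(x),-1)$), hence descends to a single copy of $X$. The inner part is the sphere bundle $S(N_XF\oplus\epsilon)=\p U$ with the fiberwise antipodal action, whose quotient is by definition the projectivization $\mathbf P(N_XF\oplus\epsilon)$. Therefore $\p(W/\bar I)=X\sqcup\mathbf P(N_XF\oplus\epsilon)$, and this bordism yields $[X]=[\mathbf P(N_XF\oplus\epsilon)]$ in $\mathfrak N$.

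The one genuinely technical step I expect to be the main obstacle is the equivariant linearization near the fixed set: one must know that, in a neighborhood of $F\times\{0\}$, the involution $\bar I$ is conjugate to the linear antipodal model on $N_XF\oplus\epsilon$. This is exactly where the invariant metric and the equivariant exponential map enter, and it is what guarantees both that $U$ can be taken to be the antipodal disk bundle and that the inner boundary is the sphere bundle with the antipodal action, so that its quotient is the stated projective bundle. Once this normal form is secured, the identification of the two boundary components and the verification that $W/\bar I$ is a genuine cobordism are routine.
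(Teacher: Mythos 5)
Your proof is correct, but note that it cannot be compared to a proof ``in the paper'': the paper does not prove Theorem \ref{thm: cruciallem} at all, it simply cites Conner--Floyd \cite[Theorem (24.2)]{CF}. What you have written is essentially the classical argument behind that citation, so in effect you have made this step of the paper self-contained. The construction is sound at every point. Forming $X\times[-1,1]$ with $\bar I(x,t)=(I(x),-t)$ correctly isolates the fixed set as $F\times\{0\}$ in the interior, so the excised tubular neighborhood never interacts with the outer boundary and no corner issues arise. The identification of the normal bundle of $F\times\{0\}$ with $N_XF\oplus\epsilon$, with $\bar I$ acting fiberwise as $-\mathrm{id}$, is justified because along $F$ the differential of $I$ splits $TX|_F$ into $(\pm1)$-eigenbundles, the $(+1)$-eigenbundle being $TF$ and the $(-1)$-eigenbundle being (a model for) $N_XF$, while the interval direction contributes the trivial summand on which $\bar I$ acts by $-1$. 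The equivariant tubular neighborhood theorem (invariant metric obtained by averaging, equivariant exponential map) then gives the antipodal disk-bundle model $U\cong D(N_XF\oplus\epsilon)$, exactly as you flag in your last paragraph; this is indeed the only genuinely technical input. Since $\bar I$ is free on $W=(X\times[-1,1])\setminus\mathrm{int}(U)$, the quotient $W/\bar I$ is a compact manifold with boundary, and its boundary is computed correctly: the two outer copies $X\times\{\pm1\}$ are interchanged and descend to a single copy of $X$, while $S(N_XF\oplus\epsilon)$ with the fiberwise antipodal action descends to $\mathbf{P}(N_XF\oplus\epsilon)$ by the very definition of the real projectivization. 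Hence $\partial(W/\bar I)=X\sqcup\mathbf{P}(N_XF\oplus\epsilon)$ and $[X]=[\mathbf{P}(N_XF\oplus\epsilon)]$ in $\mathfrak{N}$, as claimed; a dimension count ($\dim F+\mathrm{codim}\,F=\dim X$ fiberwise over each component of $F$, even when components have different dimensions) confirms the classes live in the same $\mathfrak{N}_n$. The alternative route one sometimes sees---comparing Stiefel--Whitney numbers of the two sides---is what your opening sentence rejects, and your explicit-cobordism route is both more elementary and closer to the original source.
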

We refer to \cite[Theorem {(24.2)}]{CF} for the proof. We are now in a position to prove Theorem \ref{thm: realimplieszero}.
\begin{proof}[Proof of Theorem \ref{thm: realimplieszero}]
	The result will follow from the fact that
	\begin{itemize}
		\item $L$ is the fixed point set of a smooth involution, and
		\item $N_ML$ is bundle isomorphic to $TL$ since $L$ is Lagrangian.
	\end{itemize}
We consider the involution $(x,y)\in L\times L\mapsto (y,x)$ with fixed point set $\Delta=\{(x,x)\mid x\in L\}$. The correspondence
$$
TL\to N_{L\times L}\Delta\subset TL\oplus TL,\quad v\mapsto (v,-v)
$$
defines a canonical bundle isomorphism. By Theorem \ref{thm: cruciallem}, we obtain
	$$
	[L\times L]=[{\bf P}(N_{L\times L}\Delta\oplus \epsilon)]=[{\bf P}(TL\oplus \epsilon)].
	$$
On the other hand, applying Theorem \ref{thm: cruciallem}  to $M$ and $L$, and since $TL\cong N_ML$ via an $\ow$-compatible almost complex structure $J$ on $M$, we see that
	$$
	[M]=[{\bf P}(N_ML\oplus \epsilon)]=[{\bf P}(TL\oplus \epsilon)].
	$$
It follows that $[M]=[L\times L]$.
\end{proof}
\begin{proof}[Proof of Main Theorem]
	Let $L_0$ and $L_1$ be two real Lagrangians in $M$. Then, by Theorem \ref{thm: realimplieszero}, we obtain $[L_0]^2=[L_1]^2$, which implies that $\big([L_0]+[L_1]\big)^2=0$. Hence, $[L_0]=[L_1]$.
\end{proof}
\begin{remark}\
\begin{itemize}
	\item The cobordism class of a real Lagrangian in a closed symplectic manifold $(M,\ow)$ is independent of the symplectic structure $\ow$ on $M$. More precisely, if $L_j$ is a real Lagrangian in $(M, \ow_j)$ for $j=1,2$, then $[L_1]=[L_2]$.
	\item It should be mentioned that Theorem \ref{thm: realimplieszero} also holds in \emph{equivariant cobordism}, see \cite[Section 28]{CF} for the definition of equivariant cobordism classes. More precisely, if $R\colon M\to M$ is an antisymplectic involution of a closed symplectic manifold $(M,\ow)$ with fixed point set $L=\Fix(R)$, then we have the identity for the equivariant cobordism classes 
	$$
	[M,R]=[L\times L, I],
	$$
	where $I(x,y)=(y,x)$ is the interchange involution on $L\times L$. This follows from \cite[Proposition 11]{Stong}. 

	\item 	The identity for the cobordism classes in Theorem \ref{thm: realimplieszero} also holds for the fixed point set of an antiholomorphic involution on a closed complex manifold, see  Conner--Floyd \cite[Theorem (24.4)]{CF}.
\end{itemize}
\end{remark}

\begin{example}\label{ex: notunique}
We cannot dispose of the assumption that the Lagrangian is real in the main theorem. Consider the two monotone Lagrangians $\R P^{2n}$ and $\mathbb{T}^{2n}_{\Cl}$ in $\C P^{2n}$. Since $[\R P^{2n}]\ne 0$ and $[\mathbb{T}^{2n}_{\Cl}]=0$, there is no cobordism between $\R P^{2n}$ and $\mathbb{T}^{2n}_{\Cl}$. Recall that the Clifford torus $\mathbb{T}_{\Cl}^n$ is not a real Lagrangian for $n\geqslant 2$.
\end{example}
By the main theorem, whenever we can specify a diffeomorphism type of a real Lagrangian in a closed symplectic manifold $(M,\ow)$, it determines the cobordism class of real Lagrangians in $M$. For example, $\R P^n$ is a real Lagrangian in $(\C P^n,\ow_{\FS})$ and the diagonal $\Delta=\{(x,x)\mid x\in M\}$ is a real Lagrangian in a product symplectic manifold $(M\times M,(-\ow)\oplus \ow)$, where $(M,\ow)$ is a closed symplectic manifold. We therefore obtain the following immediate corollaries of the main theorem.
\begin{corollary}\
\begin{itemize}
	\item Every real Lagrangian in $\C P^n$ is cobordant to $\R P^n$.
	\item Let $(M,\ow)$ be a closed symplectic manifold. Then every real Lagrangian in a symplectic manifold $(M\times M,(-\ow)\oplus\ow)$ is cobordant to $M$.	
\end{itemize}
\end{corollary}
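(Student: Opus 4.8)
The plan is to observe that both statements follow immediately from the Main Theorem once a single concrete real Lagrangian is exhibited in each ambient manifold. Since the Main Theorem guarantees that any two real Lagrangians in a fixed closed symplectic manifold are smoothly cobordant, it suffices in each case to produce one antisymplectic involution, identify its fixed point set, and read off the diffeomorphism type of that set. The cobordism class of every real Lagrangian is then forced to coincide with the cobordism class of this distinguished example.

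For the first bullet, I would take complex conjugation $c\colon \C P^n\to \C P^n$, $[z_0:\dots:z_n]\mapsto[\bar z_0:\dots:\bar z_n]$. This is an involution, and a direct check shows $c^*\ow_{\FS}=-\ow_{\FS}$, so $c$ is antisymplectic for the Fubini--Study form. Its fixed point set is precisely the set of points admitting a representative with real homogeneous coordinates, namely $\R P^n\subset \C P^n$. Thus $\R P^n$ is a real Lagrangian, and by the Main Theorem every real Lagrangian in $\C P^n$ is cobordant to it.

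For the second bullet, I would use the interchange involution $\sigma\colon M\times M\to M\times M$, $\sigma(x,y)=(y,x)$, already featured in the proof of Theorem \ref{thm: realimplieszero}. Writing $\Omega=(-\ow)\oplus\ow=-\pi_1^*\ow+\pi_2^*\ow$ with $\pi_1,\pi_2$ the two projections, the identities $\pi_1\circ\sigma=\pi_2$ and $\pi_2\circ\sigma=\pi_1$ give $\sigma^*\Omega=-\pi_2^*\ow+\pi_1^*\ow=-\Omega$, so $\sigma$ is antisymplectic. Its fixed point set is the diagonal $\Delta=\{(x,x)\mid x\in M\}$, which is diffeomorphic to $M$ via $x\mapsto(x,x)$, and in particular $[\Delta]=[M]$ in $\mathfrak{N}$. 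Hence $\Delta$ is a real Lagrangian in $(M\times M,\Omega)$, and the Main Theorem yields that every real Lagrangian in $(M\times M,(-\ow)\oplus\ow)$ is cobordant to $\Delta$, hence to $M$.

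There is essentially no serious obstacle here: the entire content is bookkeeping already carried out in the discussion preceding the corollary, together with the two short verifications that conjugation and the swap are genuinely antisymplectic. The only point requiring a moment's care is confirming the sign in each pullback computation --- that $c^*\ow_{\FS}=-\ow_{\FS}$ and $\sigma^*\Omega=-\Omega$ --- since it is precisely the \emph{antisymplectic} (rather than symplectic) nature of these involutions that makes their fixed loci Lagrangian and thus legitimate inputs to the Main Theorem.
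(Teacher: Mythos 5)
Your proposal is correct and is essentially the paper's own argument: the paper also exhibits $\R P^n$ (fixed locus of complex conjugation) as a real Lagrangian in $(\C P^n,\ow_{\FS})$ and the diagonal $\Delta\cong M$ (fixed locus of the swap) as a real Lagrangian in $(M\times M,(-\ow)\oplus\ow)$, and then invokes the Main Theorem to pin down the cobordism class. The explicit sign checks you include are correct and merely spell out what the paper leaves implicit.
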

It is worth noting that the mod 2 cohomology ring of a  real Lagrangian in $\C P^n$ is isomorphic to the mod 2 cohomology ring of $\R P^n$. Indeed, it is known by Bredon \cite[Theorem 3.2, Chapter VII]{Bre} that the fixed point set $F$ of any involution on $\C P^n$ satisfies one of the following:
\begin{enumerate}[label=(\arabic*)]
	\item $F$ is connected and $H^*(F;\Z_2)\cong H^*(\R P^n;\Z_2)$.
	\item $F$ is connected and $H^*(F;\Z_2)\cong H^*(\C P^n;\Z_2)$.
	\item $F=\emptyset$ and $n$ is odd.
	\item $F=F_1\sqcup F_2$ and $H^*(F_j;\Z_2)\cong H^*(\C P^{n_j};\Z_2)$ for $j=1,2$ and $n=n_1+n_2+1$.
\end{enumerate}
Here all isomorphisms have to be understood as  isomorphisms of graded rings. Since the only possible case for real Lagrangians is the first case, we obtain the following.

\begin{proposition}\label{prop: cohoRPinCP}
The mod 2 cohomology ring of a real Lagrangian in $\C P^n$ is isomorphic to the mod 2 cohomology ring of $\R P^n$.
\end{proposition}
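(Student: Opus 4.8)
The plan is to combine Bredon's classification of fixed point sets of involutions on $\C P^n$, quoted just above the statement, with the defining features of a real Lagrangian: it is the nonempty fixed point set of an antisymplectic (hence in particular smooth) involution, and being Lagrangian it is a closed submanifold of real dimension exactly $n$. The strategy is to show that among the four alternatives (1)--(4) only case (1) is compatible with these constraints, which then yields the claimed ring isomorphism immediately.

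First I would let $L$ be a real Lagrangian in $\C P^n$, so that $L=\Fix(R)$ for some antisymplectic involution $R$. Forgetting the symplectic structure, $R$ is an involution on the closed manifold $\C P^n$, so $F=L$ must fall into one of the cases (1)--(4). Case (3) is excluded at once, since a real Lagrangian is nonempty by definition. Note also that $L$ is closed: $\C P^n$ is boundaryless, and hence so is $\Fix(R)$, while compactness is inherited from $\C P^n$.

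Next I would rule out cases (2) and (4) by a dimension count, using that $L$ is a closed manifold of pure real dimension $n$ and that for a closed manifold the top degree in which its mod $2$ cohomology is nonvanishing equals its (top-component) real dimension. In case (2) the fixed point set satisfies $H^*(F;\Z_2)\cong H^*(\C P^n;\Z_2)$, whose top nonvanishing degree is $2n$, forcing $\dim F=2n$; this contradicts $\dim L=n$ for every $n\geqslant 1$. In case (4) each component $F_j$ satisfies $H^*(F_j;\Z_2)\cong H^*(\C P^{n_j};\Z_2)$, so $\dim F_j=2n_j$; since both components are Lagrangian and therefore of dimension $n$, this gives $n_1=n_2=n/2$, and the constraint $n=n_1+n_2+1$ then reads $n=n+1$, a contradiction.

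Therefore only case (1) survives, giving $H^*(L;\Z_2)\cong H^*(\R P^n;\Z_2)$ as graded rings, which is exactly the assertion. I do not expect a genuine obstacle here, as Bredon's theorem carries the entire topological load; the one point that demands care is that a real Lagrangian need not be connected, so case (4) cannot be dismissed by a connectivity argument and must instead be eliminated by the dimension bookkeeping above.
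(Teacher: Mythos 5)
Your proof is correct and follows essentially the same route as the paper: the paper likewise invokes Bredon's classification and simply notes that ``the only possible case for real Lagrangians is the first case,'' leaving implicit exactly the nonemptiness and dimension arguments you spell out (a Lagrangian in $\C P^n$ has dimension $n$, while cases (2) and (4) force components of dimension $2n$ resp.\ $2n_j$). Your explicit bookkeeping for case (4) is a faithful filling-in of that one-line justification rather than a different approach.
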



Theorem \ref{thm: realimplieszero} also has the following corollary.
\begin{corollary}\label{cor: ambcobordant}
	Suppose that $L_j$ is a real Lagrangian in a closed symplectic manifold $(M_j,\ow_j)$ for $j=1,2$. If $L_1$ and $L_2$ are cobordant, then $M_1$ and $M_2$ are cobordant.	
\end{corollary}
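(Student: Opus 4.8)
The plan is to reduce the statement to the multiplicative identity of Theorem \ref{thm: realimplieszero} together with the basic fact, recalled in Section \ref{sec: sec2}, that the cobordism relation is exactly equality in the Thom cobordism ring $\mathfrak{N}$. First I would recall that two closed manifolds are cobordant precisely when they represent the same cobordism class; thus the hypothesis that $L_1$ and $L_2$ are cobordant translates into the equality $[L_1]=[L_2]$ in $\mathfrak{N}$.

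Next I would invoke Theorem \ref{thm: realimplieszero} for each index $j\in\{1,2\}$. Since $L_j$ is a real Lagrangian in the closed symplectic manifold $(M_j,\ow_j)$, that theorem yields $[M_j]=[L_j\times L_j]$. Using the product structure on $\mathfrak{N}$, namely $[X_1]\cdot[X_2]=[X_1\times X_2]$, this reads $[M_j]=[L_j]^2$.

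Combining these, I would conclude
$$
[M_1]=[L_1]^2=[L_2]^2=[M_2],
$$
where the middle equality is just the squaring of $[L_1]=[L_2]$. Hence $[M_1]=[M_2]$ in $\mathfrak{N}$, which is to say that $M_1$ and $M_2$ are cobordant.

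I do not expect a genuine obstacle here: the corollary is a formal consequence of Theorem \ref{thm: realimplieszero} and the ring structure of $\mathfrak{N}$. The only point that deserves a word is that both the identification of the cobordism relation with equality of classes and the multiplicativity of $[\,\cdot\,]$ are part of the standard setup recalled in Section \ref{sec: sec2}, so no additional input is needed; in particular, no further $J$-holomorphic curve theory or Smith-theoretic argument enters. One might remark, as a sanity check, that the converse fails, since the cobordism class of the ambient manifold records only the square $[L]^2$ and hence cannot in general recover $[L]$.
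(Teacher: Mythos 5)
Your proof is correct and follows exactly the route the paper intends: the corollary is stated as an immediate consequence of Theorem \ref{thm: realimplieszero}, and the argument is precisely $[M_j]=[L_j\times L_j]=[L_j]^2$ together with squaring the hypothesis $[L_1]=[L_2]$. No gaps, and no additional input is needed beyond the ring structure of $\mathfrak{N}$ recalled in Section \ref{sec: sec2}.
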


\begin{example}
Corollary \ref{cor: ambcobordant} can fail if we drop the reality condition.
	Consider monotone Lagrangian tori in $S^2\times S^2$ and $\C P^2$. Since $[S^2\times S^2]=0$ and $[\C P^2]\ne 0$, there is no cobordism between $S^2\times S^2$ and $\C P^2$. 
\end{example}

\begin{example}
	The equator of $S^2$ is a real Lagrangian circle and the diagonal
	$
	\Delta=\{(x,x)\mid x\in S^1\}\subset T^2
	$
	is a real Lagrangian circle in $T^2$, and $S^2$ and $T^2$ are indeed cobordant.
\end{example}

\section{Classifications of real Lagrangians in some rational symplectic 4-manifolds}\label{sec: classification}
If a symplectic manifold $(M,\ow)$ is \emph{spherically monotone}, meaning that there exists $C>0$ such that for all $\alpha\in \pi_2(M)$ we have
$$
c_1(M)[\alpha] = C\cdot \ow(\alpha),
$$
then every real Lagrangian is monotone as a Lagrangian submanifold \cite[p.\ 954]{Oh}. By a \emph{monotone} Lagrangian submanifold we mean that there exists $K>0$ such that for all $\beta\in \pi_2(M,L)$ we have
$$
\mu_L(\beta)=K\cdot \ow(\beta)
$$
where $\mu_L\colon\pi_2(M,L)\to \Z$ denotes the Maslov class of $L$. We refer to \cite[Section~2]{Oh} for the definition of $\mu_L$. In that paper, $\mu_L$ is referred to as the Maslov index homomorphism $I_{\mu,L}$.

In this section, we discuss the classification problem of real Lagrangians in two spherically monotone rational symplectic manifolds: $(\C P^2,\ow_{\FS})$ and $(S^2\times S^2,\ow\oplus \ow)$, where $\ow$ is a Euclidean area form on $S^2$. \begin{remark}\
\begin{itemize}
	\item Since $c_1(\C P^n)=(n+1)[\ow_{\FS}]$ and $c_1(S^2\times S^2)=2[\ow\oplus\ow]$, the symplectic manifolds $\C P^2$ and $S^2\times S^2$ are spherically monotone. See \cite[Example 4.4.3]{MS}.
	\item A symplectic structure on $\C P^2$ and $S^2\times S^2$ which is spherically monotone is unique up to symplectomorphism and scaling, see \cite[Theorem B]{Tau}, \cite{Mcduff} and \cite[Theorem B]{LiLiu}.	
\end{itemize}
\end{remark}

\subsection{Real Lagrangians in $\C P^2$}\label{sec: cp2}
We have seen that a real Lagrangian in $\C P^n$ is cobordant to $\R P^n$ and that its mod 2 cohomology ring is isomorphic to the one of $\R P^n$. For $n=2$ a much stronger result holds, see Proposition \ref{prop: A}. We shall prove this.
\begin{proof}[Proof of Proposition \ref{prop: A}]
By the Smith inequality, the only closed surfaces that can be the fixed point set of an involution of $\C P^2$ are $S^2$ and $\R P^2$. The Euler characteristic relation further excludes $S^2$. In particular, a real Lagrangian in $\C P^2$ must be diffeomorphic to $\R P^2$. (This also follows directly from Proposition~\ref{prop: cohoRPinCP}.) We already know that $\R P^2$ is a real Lagrangian. By Li--Wu \cite[Theorem 6.9]{LiWu}, any two Lagrangian $\R P^2$ in $\C P^2$ are Hamiltonian isotopic.
\end{proof}
On the other hand, one can examine the \emph{Lagrangian cobordism relation} introduced by Arnold \cite{Arn2,Arn1}. We refer to Biran--Cornea's Lagrangian cobordism theory \cite{BCjams,BCfuk,BCcone} for recent developments.

We say that two closed Lagrangians $L$ and $L'$ in a symplectic manifold $(M,\ow)$ are \emph{Lagrangian cobordant} if there exist a smooth cobordism $V$ between $L$ and $L'$, and a Lagrangian embedding
$$
i\colon V\hookrightarrow \big(([0,1]\times \R)\times M,(dx\wedge dy)\oplus \ow \big)
$$
such that for some $\epsilon>0$ we have 
\begin{eqnarray*}
V\cap {\big([0,\epsilon)\times \R\times M\big)} &=& [0,\epsilon)\times \{0\}\times L	, \\
V\cap {\big((1-\epsilon,1]\times \R\times M\big)} &=& (1-\epsilon,1]\times \{0\}\times L'.
\end{eqnarray*}
If two connected Lagrangians are Hamiltonian isotopic, then the Lagrangian suspension construction shows that they are Lagrangian cobordant \cite[Section~2.3]{BCjams}. Hence, we obtain the following.
\begin{corollary}\label{cor: uniqueCP2}
	Any two real Lagrangians in $\C P^2$ are Lagrangian cobordant.
\end{corollary}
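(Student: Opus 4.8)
The plan is to deduce the corollary immediately from Proposition~\ref{prop: A} together with the Lagrangian suspension construction recalled just above its statement. First I would let $L_0$ and $L_1$ be two real Lagrangians in $\C P^2$. By Proposition~\ref{prop: A}, each $L_i$ is Hamiltonian isotopic to $\R P^2$. Since Hamiltonian isotopy is an equivalence relation on Lagrangian submanifolds, transitivity yields that $L_0$ and $L_1$ are themselves Hamiltonian isotopic.

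Next I would note that $\R P^2$ is connected, and hence so are $L_0$ and $L_1$. The Lagrangian suspension construction, as stated above, shows that any two connected Hamiltonian isotopic Lagrangians are Lagrangian cobordant. Applying this to the Hamiltonian isotopy between $L_0$ and $L_1$ produces a Lagrangian cobordism between them, which is exactly the assertion. (Alternatively, one could argue that each $L_i$ is Lagrangian cobordant to $\R P^2$ via the suspension and then glue the two cobordisms, but routing everything through transitivity of Hamiltonian isotopy avoids appealing to transitivity of the Lagrangian cobordism relation.)

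I expect no genuine obstacle here: all the real content is absorbed into Proposition~\ref{prop: A}, whose proof in turn rests on the Smith inequality, the Euler characteristic relation, and the Li--Wu uniqueness theorem for Lagrangian $\R P^2$ in $\C P^2$. The only mild point to verify is that the connectedness hypothesis required by the suspension construction genuinely holds, which is immediate since the common diffeomorphism type is the connected manifold $\R P^2$. Everything else is routine composition of isotopies.
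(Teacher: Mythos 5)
Your proof is correct and follows exactly the paper's route: both deduce from Proposition~\ref{prop: A} that any two real Lagrangians in $\C P^2$ are Hamiltonian isotopic connected Lagrangians, and then invoke the Lagrangian suspension construction of \cite[Section~2.3]{BCjams} to conclude they are Lagrangian cobordant. Your explicit check of connectedness and the use of transitivity of Hamiltonian isotopy are just the details the paper leaves implicit.
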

In general, two real Lagrangians may not be Lagrangian cobordant, see Section \ref{sec: S2xS2}.
\begin{remark}
The classification of monotone but not necessarily real Lagrangian submanifolds in $\C P^2$ is much more complicated:
\begin{itemize}
	\item By Shevchishin \cite{SheNotKtoCP2}, there is no Lagrangian Klein bottle in $\C P^2$.
	\item Vianna \cite[Theorem 1.1]{Vian} constructed infinitely many symplectomorphism classes of monotone Lagrangian tori in $\C P^2$ by (partially) proving the conjecture of Galkin--Usnich \cite[p.\ 4]{GalkinUsnich}. They conjectured the existence of an even larger class of infinitely many monotone Lagrangian tori in $\C P^2$.
	\item Actually, there is no Lagrangian sphere in $\C P^2$ for homological reasons: no homology class in $H_2(\C P^2;\Z)$ has self-intersection number $-2$.
	\item Besides for real Lagrangians, there are non-real monotone Lagrangians in $\C P^2$, for example, the Clifford torus $\mathbb{T}_{\Cl}^2$  and a Lagrangian submanifold in $\C P^2$ diffeomorphic to $\Sigma_2\# K$, where $\Sigma_2$ is a closed orientable surface of genus 2 and $K$ is the Klein bottle. See \cite[Theorem 2]{AG}.
\end{itemize}
\end{remark}

\subsection{Real Lagrangians in $S^2\times S^2$}\label{sec: S2xS2}
We shall prove Proposition \ref{prop: B}. By the Smith inequality,
$$
S^2, \quad T^2,\quad \R P^2,\quad K=\R P^2\#\R P^2,\quad S^2\sqcup S^2
$$
are the only closed surfaces that can possibly be the fixed point set of an involution. The Euler characteristic relation excludes $\R P^2$ from this list. Recall that the \emph{antidiagonal sphere} $\overline{\Delta}$ and the \emph{Clifford torus} $\mathbb{T}_{\Cl}$ are real Lagrangians in $S^2\times S^2$, see Section \ref{sec: intro}. By Weinstein neighborhood theorem, any Lagrangian sphere in a symplectic 4-manifold has self-intersection number $-2$ and hence is homologous to $\overline{\Delta}$. This implies that there is no disjoint union of two Lagrangian spheres in $S^2\times S^2$. We learned the following lemma and its proof from Grigory Mikhalkin. 
	\begin{lemma}\label{lem: mik}
Assume that the fixed point set $F = \Fix(I)$ of a smooth involution $I$ on $S^2 \times S^2$ is diffeomorphic to a closed connected surface. Then $F$ must be orientable.
	\end{lemma}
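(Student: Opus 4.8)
The plan is to reduce the statement to excluding the single non-orientable candidate, the Klein bottle, and then to separate it from the torus by an \emph{integral} equivariant invariant, since all mod-2 and rational data coincide. First I would recall that by the Smith inequality (Theorem \ref{thm: smithineq}) the only connected surfaces with $\dim H_*(\cdot;\Z_2)\le \dim H_*(S^2\times S^2;\Z_2)=4$ are $S^2$, $T^2$, $\R P^2$ and the Klein bottle $K=\R P^2\#\R P^2$, and that $\R P^2$ is ruled out by the Euler characteristic relation (Theorem \ref{thm: eulobs}), since $\chi(\R P^2)=1$ is odd while $\chi(S^2\times S^2)=4$ is even. As $S^2$ and $T^2$ are orientable, the whole lemma comes down to showing that $F$ cannot be the Klein bottle.

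Next I would assemble the equivariant data. Along the $2$-dimensional fixed locus the differential splits as $dI_x=\id_{T_xF}\oplus(-\id_{N_x})$, so $\det dI_x=(+1)^2(-1)^2=+1$ and $I$ is orientation-preserving. Restricting $w(T(S^2\times S^2))=1$ to $F$ yields the identity $w(TF)\,w(N)=1$, whence $w_1(N)=w_1(F)$ and $w_2(N)=0$; in particular $F$ is orientable if and only if its normal bundle is. Applying the Lefschetz fixed point theorem with $\chi(F)=0$ and $I^*=+\id$ on $H^0,H^4$ forces $I^*=-\id$ on $H^2(S^2\times S^2;\Z)=\Z^2$, and hence the $G$-signature $\operatorname{Sign}(I)=0$; through the $G$-signature theorem this merely records that the normal Euler number vanishes. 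The key difficulty surfaces here: the torus and the Klein bottle produce \emph{identical} mod-2 characteristic numbers, Euler characteristics, and $G$-signatures, so none of these invariants can separate them.

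To break the tie I would exploit the feature that distinguishes $S^2\times S^2$ from $\C P^2$, where $\R P^2$ genuinely occurs as $\Fix$ of complex conjugation: $S^2\times S^2$ is \emph{spin}, and since $H^1(S^2\times S^2;\Z_2)=0$ its spin structure is unique and therefore $I$-invariant. The plan is to feed the involution into the Atiyah--Bott--Singer Lefschetz fixed point formula for the spin Dirac operator, whose equivariant index lies in $\Z[\Z_2]$ and is in particular integral. For an orientable fixed surface the normal bundle inherits a spin structure and the fixed-point contribution is well-behaved; but when $F=K$ one has $w_1(N)=w_1(K)\ne 0$, so $N$ is non-orientable, carries no spin structure, and the corresponding local term fails to be integral, contradicting integrality of the index. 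This is precisely the step that collapses for $\C P^2$, which is not spin and hence supports no such obstruction, consistent with the presence of $\R P^2$ there.

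The main obstacle is exactly this last step: one must pin down the normal-bundle contribution in the Dirac fixed point formula in the non-orientable case and verify that it is genuinely non-integral. Equivalently, I would reformulate the obstruction either through the orientation double cover $\hat F=T^2\to K$, tracking how the deck involution acts on the Euler class of $\hat N=p^*N$, or through a Guillou--Marin/Rokhlin-type congruence for the characteristic surface $F$ inside the spin manifold $S^2\times S^2$. All the surrounding steps are routine; the entire content of the lemma is concentrated in showing that a non-orientable fixed surface is incompatible with the spin structure of $S^2\times S^2$.
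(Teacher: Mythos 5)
Your preliminary reductions are correct (Smith inequality plus the Euler characteristic relation do reduce the lemma to excluding the Klein bottle, the involution is orientation-preserving along a fixed surface, $w_1(N)=w_1(F)$, $I^*=-\id$ on $H^2$, and the $G$-signature forces normal Euler number $0$), but the proof has a genuine gap exactly where you place "the entire content of the lemma": the claimed non-integrality of the Dirac fixed-point contribution is never established, and in fact it cannot work as described. For a $2$-dimensional fixed component $F$ of an involution on a $4$-manifold, the Atiyah--Bott--Singer local term for the spin Dirac operator is an integral over $F$ of $\hat{A}(TF)$ times a normal-bundle factor of the form $\bigl(2\cos(e/2)\bigr)^{-1}$; neither factor has a degree-$2$ component (the first correction terms, $-p_1/24$ and $e^2/8$, are $4$-forms), so every fixed surface contributes \emph{zero} to the equivariant index, regardless of whether $F$ or $N$ is orientable and regardless of the normal Euler number. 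Thus integrality of the equivariant index imposes no constraint at all in this dimension: both sides of the Lefschetz formula read $0=0$, consistent with all the invariants you computed. On top of this there are foundational issues you would have to resolve even to set the formula up: a codimension-$2$ fixed set forces any lift $\tilde{I}$ to the spin bundle to satisfy $\tilde{I}^2=-1$ (one is really doing $\Z_4$-equivariant theory), and for non-orientable $N$ the local term must be formulated with twisted/Pin data rather than declared undefined --- "the formula does not literally parse" is not a contradiction. Your fallback via Guillou--Marin is likewise only named, not executed: one would first need $[F]=0$ in $H_2(S^2\times S^2;\Z_2)$ (needed for $F$ to be characteristic, since the form is even) and then show the induced $\mathrm{Pin}^-$ structure on $K$ has nonzero Brown invariant, which is not automatic (the Klein bottle admits $\mathrm{Pin}^-$ structures with Brown invariant $0$).

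The paper's proof is entirely different and elementary, and it is worth seeing why it succeeds where index theory gives nothing. Assuming $F$ non-orientable, one picks an embedded loop $\gamma\subset F$ with $[\gamma]\cdot[\gamma]=1 \bmod 2$ in $F$, caps it off by an oriented surface $\Sigma$ with $\partial\Sigma=\gamma$ (this is where simple connectivity of $S^2\times S^2$ enters), and forms the closed $I$-invariant surface $Z=\Sigma\cup I(\Sigma)$. A carefully chosen $I$-invariant perturbation $\tilde{Z}$ shows that the intersection points of $Z$ and $\tilde{Z}$ away from $F$ come in pairs $\{x, I(x)\}$, so $[Z]\cdot[Z]=[\gamma]\cdot[\gamma]=1\bmod 2$, contradicting the evenness of the intersection form of $S^2\times S^2$. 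Note that your instinct about what distinguishes $S^2\times S^2$ from $\C P^2$ was sound --- for simply connected closed $4$-manifolds, "spin" is exactly "even intersection form", and the paper's proof uses precisely evenness plus simple connectivity --- but the feature has to be exploited through intersection theory of an $I$-invariant membrane, not through the equivariant Dirac operator, whose local contributions are blind to everything relevant here.
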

	\begin{proof}
Assume to the contrary that $F$ is non-orientable. We then find an embedded loop $\gamma$ in $F$ so that its mod 2 self-intersection number in $F$ is
$$
[\gamma]\cdot[\gamma]=1 \mod 2.
$$
Since $S^2\times S^2$ is simply-connected, we can choose a connected oriented surface $\Sigma \subset S^2\times S^2$ with $\p \Sigma = \gamma$ such that $\Sigma$ meets $F$ normally along $\gamma$ and transversely elsewhere.  Consider the closed surface $Z:=\Sigma\cup I(\Sigma)$ with homology class $[Z]\in H_2(S^2\times S^2;\Z_2)$. We claim that the mod 2 self-intersection number~of~$Z$~is
\begin{equation}\label{eq: eq1}
[Z]\cdot [Z] = [\gamma]\cdot [\gamma] = 1 \mod 2.
\end{equation}
But this is impossible since  the intersection form of $S^2\times S^2$ is even, i.e., $A\cdot A$ is even for all $A\in H_2(S^2\times S^2;\Z)$. 

To show \eqref{eq: eq1}, we shall construct an $I$-invariant perturbation $\tilde{Z}$ of $Z$ such~that
\begin{enumerate}
	\item [(i)] $\tilde{Z}$ and $Z$ intersect transversely,
	\item [(ii)] $\tilde{Z}\cap F$ is the union of an embedded loop $\tilde{\gamma}$ and finitely many points,
	\item [(iii)] $\tilde{\gamma}$ and $\gamma$ intersect transversely inside $F$,
	\item [(iv)] $Z\cap \tilde{Z}\cap F=\gamma\cap \tilde{\gamma}$.
\end{enumerate}
See the right drawing in Figure \ref{fig: construction}. The construction of $\tilde{Z}$ goes as follows. We first isotope $Z$ in a small neighborhood $U$ of $F$ in $M$ to an $I$-invariant surface $Z_1$ such that $Z_1$ and $Z$ intersect transversely in $U$, and the conditions (ii), (iii) and (iv) hold. Indeed, during this perturbation, $Z\cap F$ moves inside $F$. Since away from $\gamma$, the surface $Z$ intersects $F$ transversely in a finite number of points, we can achieve the condition (iv) by moving slightly near these points. Noting that away from $U$ the involution $I$ is free, we can isotope $Z_1$ (while keeping it fixed in a neighbourhood of $F$) to an $I$-invariant surface $\tilde{Z}$ such that all conditions (i)--(iv) are fulfilled. See Figure \ref{fig: construction} for a description of the construction.

Let $x\in Z\cap \tilde{Z}$ be an intersection point. Since $Z$ and $\tilde{Z}$ are $I$-invariant, we have $I(x)\in Z\cap \tilde{Z}$. If $I(x)=x$, then by condition (iv) we obtain $x\in \gamma\cap \tilde{\gamma}$. Hence, all intersection points of $Z$ and $\tilde{Z}$ come in pairs, except for those coming from $\gamma\cap \tilde{\gamma}$, whence \eqref{eq: eq1} follows, see Figure \ref{fig: construction}. 
\end{proof}
\begin{figure*}[h]
\begin{center}
\begin{tikzpicture}[scale=0.4]
\draw (-7,0)--(5,0);
\draw (-6,2)--(6,2);
\draw (-7,0)--(-6,2);
\draw (5,0)--(6,2);
\draw [red,thick] (-1,1) arc [x radius=1, y radius=0.4, start angle=0, end angle= -180];
\draw [red,thick,dashed] (-1,1) arc [x radius=1, y radius=0.4, start angle=0, end angle= 180];
\draw [thick] plot [smooth,tension=0.7] coordinates {
(-1,1)
(0,4)
(1,5)
(2,4)
(2.5,2)
(3,1)
 } ;

\draw [thick] plot [smooth,tension=0.7] coordinates {
(-3,1)
(-2,5)
(-1,7)
(0,7.5)
(1,7)
(2,6)
(3,4)
(3.5,2)
(3,1)
 } ;

\begin{scope}[yshift=2cm, yscale=-1]
\draw [thick] plot [smooth,tension=0.7] coordinates {
(-1,1)
(0,4)
(1,5)
(2,4)
(2.5,2)
(3,1)
 } ;

\draw [thick] plot [smooth,tension=0.7] coordinates {
(-3,1)
(-2,5)
(-1,7)
(0,7.5)
(1,7)
(2,6)
(3,4)
(3.5,2)
(3,1)
 } ;	
\end{scope}

\begin{scope}[xshift=0.5cm, yshift=2cm, yscale=-1]
\draw [dashed, thick] plot [smooth,tension=0.7] coordinates {
(1.7, 3.6)
(2.5,2)
(3,1)
 } ;
\draw [dashed, thick] plot [smooth,tension=0.7] coordinates {
(-1,1)
(-0.9,2)
(-0.7,3.5)
};
\draw [dashed, thick] plot [smooth,tension=0.7] coordinates {
(-3,1)
(-3,2)
(-2.9, 3.5)
};
\draw [dashed, thick] plot [smooth,tension=0.7] coordinates {
(2.7,3.6)
(3.5,2)
(3,1)
 } ;	
\end{scope}

\draw [red,fill] (3,1) circle [radius=0.1];

\begin{scope}[xshift=0.5cm, yscale=1]
\draw [dashed, thick] plot [smooth,tension=0.7] coordinates {
(1.7, 3.6)
(2.5,2)
(3,1)
 } ;
 \draw [dashed, thick] plot [smooth,tension=0.7] coordinates {
(-3,1)
(-3,2)
(-2.9, 3.5)
};
\draw [dashed, thick] plot [smooth,tension=0.7] coordinates {
(-1,1)
(-0.9,2)
(-0.7,3.5)
};
\draw [dashed, thick] plot [smooth,tension=0.7] coordinates {
(2.7,3.6)
(3.5,2)
(3,1)
 } ;	
  \draw [red, dashed] (-1,1) arc [x radius=1, y radius=0.4, start angle=0, end angle= -180];
\draw [red, dashed] (-1,1) arc [x radius=1, y radius=0.4, start angle=0, end angle= 180];
\end{scope}
\draw [red] (3.5,1) circle [radius=0.1];

\node at (-5,0.5) [above]{$F$};
\node at (3, 5.5) [above]{$Z$};
\node at (4.5,2.5) [above]{$Z_1$};
\node at (-5,0.5) [above]{$F$};
\node at (-3.5,0.5) [above]{$\gamma$};
\node at (0.1,0.5) [above]{$\tilde{\gamma}$};
\node at (-3,4) [above]{$\Sigma$};
\node at (-3,-4) [above]{$I(\Sigma)$};


\begin{scope}[xshift=16cm]
\draw (-7,0)--(5,0);
\draw (-6,2)--(6,2);
\draw (-7,0)--(-6,2);
\draw (5,0)--(6,2);
\draw [red,thick] (-1,1) arc [x radius=1, y radius=0.4, start angle=0, end angle= -180];
\draw [red,thick,dashed] (-1,1) arc [x radius=1, y radius=0.4, start angle=0, end angle= 180];
\draw [thick] plot [smooth,tension=0.7] coordinates {
(-1,1)
(0,4)
(1,5)
(2,4)
(2.5,2)
(3,1)
 } ;

\draw [thick] plot [smooth,tension=0.7] coordinates {
(-3,1)
(-2,5)
(-1,7)
(0,7.5)
(1,7)
(2,6)
(3,4)
(3.5,2)
(3,1)
 } ;

 \draw [blue,fill] (0.1,7.5) circle [radius=0.1];
 \draw [blue,fill] (1.2,4.95) circle [radius=0.1];
 \draw [blue,fill] (3.3,1.3) circle [radius=0.1];
\begin{scope}[yshift=2cm, yscale=-1]
\draw [thick] plot [smooth,tension=0.7] coordinates {
(-1,1)
(0,4)
(1,5)
(2,4)
(2.5,2)
(3,1)
 } ;

\draw [thick] plot [smooth,tension=0.7] coordinates {
(-3,1)
(-2,5)
(-1,7)
(0,7.5)
(1,7)
(2,6)
(3,4)
(3.5,2)
(3,1)
 } ;	
\end{scope}

\begin{scope}[xshift=0.5cm, yshift=2cm, yscale=-1]
\draw [dashed, thick] plot [smooth,tension=0.7] coordinates {
(-1,1)
(0,4)
(1,5)
(2,4)
(2.5,2)
(3,1)
 } ;

\draw [dashed, thick] plot [smooth,tension=0.7] coordinates {
(-3,1)
(-2,5)
(-1,7)
(0,7.5)
(1,7)
(2,6)
(3,4)
(3.5,2)
(3,1)
 } ;	
 \draw [blue,fill] (-0.4,7.5) circle [radius=0.1];
  \draw [blue,fill] (0.7,4.95) circle [radius=0.1];
 \draw [blue,fill] (2.8,1.3) circle [radius=0.1];
\end{scope}

\begin{scope}[xshift=0.5cm, yscale=1]
\draw [dashed, thick] plot [smooth,tension=0.7] coordinates {
(-1,1)
(0,4)
(1,5)
(2,4)
(2.5,2)
(3,1)
 } ;

\draw [dashed, thick] plot [smooth,tension=0.7] coordinates {
(-3,1)
(-2,5)
(-1,7)
(0,7.5)
(1,7)
(2,6)
(3,4)
(3.5,2)
(3,1)
 } ;	
  \draw [red, dashed] (-1,1) arc [x radius=1, y radius=0.4, start angle=0, end angle= -180];
\draw [red, dashed] (-1,1) arc [x radius=1, y radius=0.4, start angle=0, end angle= 180];

\end{scope}

\node at (-3,4) [above]{$Z$};
\node at (3,6) [above]{$\tilde{Z}$};
\node at (-3.5,0.5) [above]{$\gamma$};
\node at (0.1,0.5) [above]{$\tilde{\gamma}$};
\end{scope}

\draw [-implies,double equal sign distance] (6.5,1) -- (8.5,1);

\end{tikzpicture}		
\end{center}
\caption{The construction of $\tilde{Z}$: the intersection points of $Z$ and $\tilde{Z}$ away from $F$ come in pairs.}
\label{fig: construction}
\end{figure*}
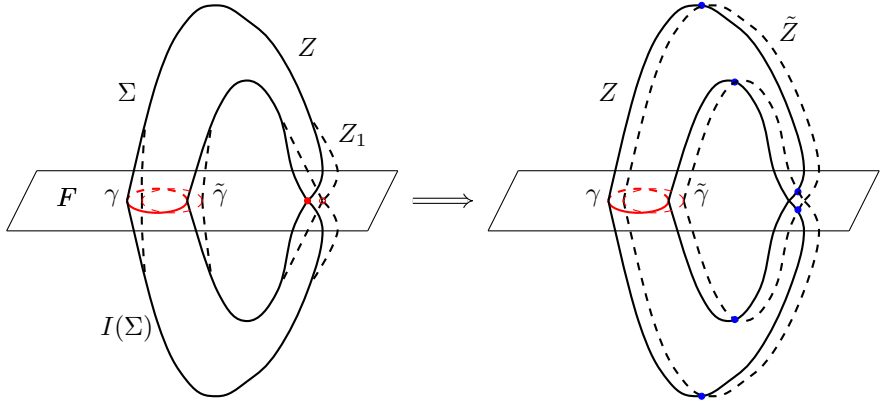

Hence, there is no a real Lagrangian Klein bottle in $S^2\times S^2$.	 By Hind \cite{HindS2S2}, any Lagrangian sphere in $S^2\times S^2$ is Hamiltonian isotopic to the antidiagonal~$\overline{\Delta}$. By Dimitroglou Rizell--Goodman--Ivrii \cite{DGI}, any two Lagrangian tori in $S^2\times S^2$ are Lagrangian isotopic to each other. To sum up, our discussion proves Proposition \ref{prop: B}.
	 
Although $\overline{\Delta}$ and $\mathbb{T}_{\Cl}$ are smoothly cobordant, they are not Lagrangian cobordant, which shows that the uniqueness of real Lagrangians up to \emph{Lagrangian cobordism} fails in general. This is an easy consequence of the following observation: if two closed Lagrangians in a symplectic manifold are Lagrangian cobordant, then they are \emph{$\Z_2$-homologous}, i.e., they represent the same $\Z_2$-homology classes. 

To see this, let $L$ and $L'$ be two Lagrangian submanifolds of the symplectic manifold $(M,\ow)$ that are Lagrangian cobordant through the Lagrangian embedding $i \colon V \hookrightarrow [0,1] \times \R \times M$.  Denoting by $\pi \colon[0,1]\times \R\times M\to M$ the projection onto $M$, the map $\pi\circ i\colon V\to M$ shows that $L$ and $L'$ are $\Z_2$-homologous. 

Recall that $H_2(S^2\times S^2;\Z_2)\cong \Z_2\oplus \Z_2$ is generated by the $S^2$-factors. By definition of $\overline{\Delta}$, its $\Z_2$-homology class is given by $(1,1)\in \Z_2\oplus \Z_2$. The $\Z$-homology class $[\mathbb{T}_{\Cl}]$ of $\mathbb{T}_{\Cl}$ vanishes and hence its $\Z_2$-homology class vanishes as well. Indeed, since the equator in $S^2$ represents the trivial class in $H_1(S^2;\Z)$, so does $\mathbb{T}_{\Cl}$ in $H_2(S^2\times S^2;\Z)$ by the K\"{u}nneth theorem. 
As a result, $\overline{\Delta}$ and $\mathbb{T}_{\Cl}$ are not $\Z_2$-homologous, and hence they are not Lagrangian cobordant.

\begin{figure*}[h]
\begin{center}
\begin{tikzpicture}
\node at (0,0) {Lagrangian cobordant};
\node at (-2.5, -1.4) {smoothly cobordant};
\node at (2.5, -1.4) {$\Z_2$-homologous};
\draw [-implies,double equal sign distance] (-1.2,-0.5) -- (-1.7, -1);
\draw [-implies,double equal sign distance] (1.2,-0.5) -- (1.7, -1);
\end{tikzpicture}
\end{center}
\end{figure*}

\begin{remark}
The discussion above shows that the Lagrangians $L$ and $L'$ that are Lagrangian cobordant represent the same element in the \emph{singular cobordism group} $\mathfrak{N}_n(M)$, where $\dim L=n$. We refer to \cite[Section 4]{CF} for its definition. This is stronger than each of the following statements:
\begin{itemize}
	\item $L$ and $L'$ are smoothly cobordant,
	\item $L$ and $L'$ are $\Z_2$-homologous.
\end{itemize}
\end{remark}

\begin{remark}\label{rem: s2s2} The classification of monotone Lagrangian submanifolds in $S^2\times S^2$ has a rich history, specially for monotone Lagrangian tori.
\begin{itemize}
	\item By Chekanov--Schlenk \cite{ChekanovSchlenk,Chekanov}, there is a monotone Lagrangian torus, called the \emph{Chekanov torus}, in $S^2\times S^2$ which is not Hamiltonian isotopic to the Clifford torus.
	\item Vianna \cite[Theorem 1.1]{Vian} constructed infinitely many symplectomorphism classes of monotone Lagrangian tori in $S^2\times S^2$.
	\item Goodman \cite[Corollary 7]{Goodman} gave the construction of two Lagrangian Klein bottles in $S^2\times S^2$ which are not homologous to each other.
	\item By Abreu--Gadbled \cite[Theorem 2]{AG}, there exists a non-real monotone Lagrangian submanifold in $S^2\times S^2$ diffeomorphic to $\Sigma_4\# K$.
\end{itemize}	
\end{remark}

\subsection*{Acknowledgement}
The author cordially thanks Felix Schlenk for sharing insight with interest, Urs Frauenfelder and Jo\'e Brendel for fruitful discussions, and the Institut de Math\'ematiques at Neuch\^atel for its warm hospitality. Grigory Mikhalkin answered all my questions in the preprint and contributed them to  this paper. I specially thank Grigory Mikhalkin for his invaluable comments. This work is supported by a Swiss Government Excellence Scholarship.

\small{
School of Mathematics, Korea Institute for Advanced Study, 85 Hoegiro, Dongdaemun-gu, Seoul 02455, Republic of Korea\vspace{0.2cm} \\
\emph{E-mail address: }\texttt{joontae@kias.re.kr}
}

\end{document}